\newtheorem{theorem}{Theorem}
\newtheorem{lemma}[theorem]{Lemma}
\newtheorem{corollary}[theorem]{Corollary}
\def\fl#1{\left\lfloor#1\right\rfloor}
\def\rf#1{\left\lceil#1\right\rceil}
\def\ep{{\mathbf{e}}_p}
\begin{document}

\title{On digit patterns in expansions of rational numbers with prime denominator}

\author{
\sc Igor E.~Shparlinski \\
Department of Computing, Macquarie University \\
Sydney, NSW 2109, Australia \\
\tt igor.shparlinski@mq.edu.au
\and
\sc  Wolfgang Steiner \\
LIAFA, CNRS, Universit{\'e} Paris Diderot -- Paris~7 \\
Case 7014, 75205 Paris Cedex 13, France \\
\tt steiner@liafa.univ-paris-diderot.fr}

\maketitle

\begin{abstract} 
We show that, for any fixed $\varepsilon>0$ and almost all primes~$p$, the $g$-ary expansion of any fraction~$m/p$ with $\gcd(m,p) =1$ contains almost all $g$-ary strings of length $k < (5/24-\varepsilon) \log_g p$. 
This complements a result of J.~Bourgain, S.~V.~Konyagin, and I.~E.~Shparlinski that asserts that, for almost all 
primes, all $g$-ary strings of length $k < (41/504-\varepsilon) \log_g p$ occur in the $g$-ary expansion of~$m/p$. 
\end{abstract}

\section{Introduction}

Let us fix some integer $g\ge 2$. 
It is well-known that if $\gcd(n,gm)=1$ then the $g$-ary expansion of  the 
rational fractions~$m/n$ is purely periodic with period~$t_n$, which is independent of~$m$ and equals 
the multiplicative order of $g$ modulo~$n$, see~\cite{Kor}.
In the series of works~\cite{BKS1,KoSh1,Kor}, the distribution of digit patterns in such expansions has been studied. 
In particular, for positive integers~$k$ and $m<n$   with $\gcd(n,gm)=1$, we denote by $T_{m,n}(k)$ the number of distinct $g$-ary strings $(d_1,\ldots, d_k) \in \{0,1,\ldots,g-1\}^k$ that occur among the first~$t_n$ 
trings $(\delta_r, \ldots, \delta_{r+k-1})$, $r=1, \ldots, t_n$, from the $g$-ary expansion 
\begin{equation}
\label{eq:expan}
\frac{m}{n} = \sum_{r=1}^\infty \delta_r g^{-r}, \quad \delta_r \in \{0,1,\ldots,g-1\}.
\end{equation}

Motivated by applications to pseudorandom number generators, see~\cite{BBS}, we are interested in describing the conditions under which $T_{m,n}(k)$ is close to its trivial upper bound 
$$
T_{m,n}(k)  \le \min\{t_n,g^k\}.
$$
Since $t_n \le n$, it is clear that only values $k \le \lceil \log_g n\rceil$ are of interest.
It has been shown in \cite[Theorem~11.1]{KoSh1} that, for any fixed $\varepsilon > 0$ and for almost all primes~$p$ (that is, for all but $o(x/\log x)$ primes $p\le x$), we have $T_{m,p}(k) = g^k$, provided that $k \le (3/37-\varepsilon) \log_g p$.
The coefficient $3/37$ has been increased up to $41/504$ in \cite[Corollary~8]{BKS1}. 
Here we show that, for almost all primes~$p$, we have $T_{m,p}(k) = (1+o(1)) g^k$ for much larger string lengths~$k$.

\begin{theorem}
\label{thm:period} 
For any fixed $\varepsilon > 0$, for almost all primes~$p$, we have 
$$
T_{m,p}(k) = (1+o(1)) g^k
$$
as $p \to \infty$, provided that $k \le (5/24-\varepsilon) \log_g p$.
\end{theorem}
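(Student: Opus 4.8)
The plan is to convert the statement into an equidistribution problem for a multiplicative coset and then run a second-moment (variance) argument, reducing everything to moment bounds for the exponential sums over the subgroup $H_p=\langle g\rangle\le\mathbb F_p^*$ generated by $g$, of order $t_p=\operatorname{ord}_p(g)$. First I would record the dictionary between blocks of digits and intervals: in the expansion~\eqref{eq:expan} the block $(\delta_r,\dots,\delta_{r+k-1})$ equals a given string $\mathbf d=(d_1,\dots,d_k)$ precisely when $\{g^{r-1}m/p\}$ lies in the interval $I_{\mathbf d}=[c_{\mathbf d},c_{\mathbf d}+g^{-k})$, where $c_{\mathbf d}=\sum_{i=1}^k d_i g^{-i}$. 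As $r$ runs through a full period, $\{g^{r-1}m/p\}$ runs through the points $a/p$ with $a\in mH_p$, so $T_{m,p}(k)$ is exactly the number of the $g^k$ tiling intervals $I_{\mathbf d}$ containing at least one such point, and the theorem is equivalent to saying that all but $o(g^k)$ of them are nonempty. Writing $N_{\mathbf d}$ for the number of points $a/p$, $a\in mH_p$, falling in $I_{\mathbf d}$, we have $\sum_{\mathbf d}N_{\mathbf d}=t_p$ and mean $\mu=t_pg^{-k}$, and Chebyshev's inequality gives
$$\#\{\mathbf d:\ N_{\mathbf d}=0\}\le \mu^{-2}\sum_{\mathbf d}(N_{\mathbf d}-\mu)^2=\mu^{-2}\Big(\sum_{\mathbf d}N_{\mathbf d}^2-t_p^2g^{-k}\Big).$$
Here I use that for almost all primes $t_p\ge p^{1/2-o(1)}$ (the primes with $\operatorname{ord}_p(g)\le T$ all divide $\prod_{s\le T}(g^s-1)$, hence number $O(T^2\log g)$), so that $\mu\ge p^{7/24-o(1)}\to\infty$ throughout the admissible range; note this constraint alone would permit $k$ up to $(1/2)\log_gp$, so the true threshold must come from the error term.

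It therefore suffices to prove $\sum_{\mathbf d}N_{\mathbf d}^2=(1+o(1))\,t_p^2g^{-k}$. Counting pairs of points lying in a common interval and expanding the interval indicators into additive characters, $\sum_{\mathbf d}N_{\mathbf d}^2$ equals the main term $t_p^2g^{-k}$ plus an error
$$\mathcal E_p(m)=\frac1p\sum_{a=1}^{p-1}|S_p(am)|^2\,\widehat W(a),\qquad S_p(b)=\sum_{u\in H_p}\ep(bu),$$
where $\widehat W\ge0$ is a Fej\'er-type weight with $\widehat W(a)\ll\min\!\big(pg^{-k},\,pg^ka^{-2}\big)$ for $1\le a\le p/2$, concentrated on $|a|\lesssim g^k$. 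Although $am$ runs over all nonzero residues as $a$ does, the whole difficulty is that the weight is concentrated near $a=0$: the dominant contribution is the \emph{incomplete} second moment $g^{-k}\sum_{1\le a\le g^k}|S_p(am)|^2$, and I must show that this is $o(t_p^2g^{-k})$, that is, $\sum_{1\le a\le g^k}|S_p(am)|^2=o(t_p^2)$, uniformly in $m$ with $\gcd(m,p)=1$.

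This is the crux and the \textbf{main obstacle}. For almost all primes $t_p$ is only guaranteed to have size about $p^{1/2}$, and then the individual square-root bound $|S_p(b)|\le p^{1/2}$ from Gauss sums gives no saving over the trivial $|S_p(b)|\le t_p$; a single large value of $|S_p(am)|$ in the short range $a\le g^k$ would already destroy the estimate. The resolution is not to bound $\mathcal E_p(m)$ for individual $p$, but to bound it on average over $p\le x$ and to show that, for each fixed $\eta>0$, the set of primes with $\max_{\gcd(m,p)=1}\mathcal E_p(m)>\eta\,t_p^2g^{-k}$ has size $o(\pi(x))$. I would control the even moments $\sum_{p\le x}\sum_a|S_p(a)|^{2s}$ by interpreting $|S_p(a)|^{2s}$ via the number of solutions of $g^{n_1}+\dots+g^{n_s}\equiv g^{n_{s+1}}+\dots+g^{n_{2s}}\pmod p$ with $0\le n_i<t_p$: for a non-degenerate tuple the associated integer combination is a fixed nonzero integer of size $g^{O(t_p)}$, so the congruence forces $p$ into its $O(t_p\log g)$ prime divisors, reducing the average to an elementary counting and divisor estimate, while the degenerate tuples produce only the expected main term. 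Combining this averaged higher-energy bound with the lower bound $t_p\ge p^{1/2-o(1)}$ and the concentration of $\widehat W$, and optimizing the moment order $s$, is exactly what balances out to the range $k\le(5/24-\varepsilon)\log_g p$; any further improvement would have to come from sharper additive-energy bounds for subgroups. Finally, I would stress why this surpasses the earlier $41/504$: forcing $T_{m,p}(k)=g^k$ (every string present) requires $L^\infty$ control, namely discrepancy below $g^{-k}$ and hence a strong bound on $\max_{a\ne0}|S_p(a)|$, whereas the weaker conclusion $T_{m,p}(k)=(1+o(1))g^k$ needs only the $L^2$ information above, which is cheaper and thus available for considerably longer strings.
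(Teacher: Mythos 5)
Your reduction of the theorem to the single estimate $\sum_{1\le |a|\le g^k}|S_p(am)|^2=o(t_p^2)$, uniformly in $m$, is correct, and it is in substance the same second-moment strategy the paper uses: the paper bounds the number of length-$H$ intervals (with $H\asymp p/g^k$) avoiding the coset $m\mathcal{G}_p$ by a Cauchy--Schwarz variance argument, which leads to exactly this kind of weighted incomplete second moment of subgroup sums. The gap is in how you propose to prove that estimate. The averaging over primes via the number of prime divisors of $g^{n_1}+\cdots+g^{n_s}-g^{n_{s+1}}-\cdots-g^{n_{2s}}$ cannot work here: with $t_p\asymp p^{1/2}$ there are $t_p^{2s}$ tuples per prime, each nonzero integer combination has up to $O(t_p\log g)$ prime divisors, and even the most favourable accounting of the union over $p\le x$ returns, per prime on average, about $t_p^{2s-1}$ nondegenerate solutions --- which is precisely the trivial bound on the additive energy $T_s(\mathcal{G})$. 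That divisor trick is effective only for incomplete sums $\sum_{n\le N}\ep(ag^n)$ with $N\le p^{o(1)}$, not for complete sums over a subgroup of order about $p^{1/2}$; and the near-optimal energy bound $T_s(\mathcal{G})\ll t^{s+o(1)}+t^{2s}/p$ that your H\"older step would then require (already $T_3\ll t^{3.2}$ for $t\asymp p^{1/2}$) is far beyond what is known --- the best available sixth-moment information, via $\max_a|S_a|\le p^{1/2}$ and the fourth moment, only gives $T_3\ll t^{9/2}$ in this range.

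What actually closes the argument --- and is the content of the paper's Lemmas~\ref{lem:Bound SG4} and~\ref{lem:H and Exp} --- is a pointwise-in-$p$ treatment of the short sum that exploits multiplicative, not just additive, structure: write $\sum_{1\le|a|\le h}|S_{am}(p;\mathcal{G})|^2=t^{-1}\sum_{\lambda}M_\lambda(p;\mathcal{G},h)\,|S_\lambda(p;\mathcal{G})|^2$, where $M_\lambda$ counts representations $\lambda\equiv aw\pmod p$ with $1\le|a|\le h$, $w\in\mathcal{G}$, apply Cauchy--Schwarz, and insert two nontrivial inputs: the Heath-Brown--Konyagin fourth moment $\sum_\lambda|S_\lambda(p;\mathcal{G})|^4\ll pt^{5/2}$ and the Bourgain--Konyagin--Shparlinski bound on the number of solutions of $ux\equiv y\pmod p$, $0<|x|,|y|\le h$, $u\in\mathcal{G}$. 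With $t\asymp p^{1/2}$, $h=g^k$ and the parameter $\nu=2$ in that bound, this yields $o(t^2)$ precisely for $g^k\le p^{5/24-\varepsilon}$, which is where the exponent comes from. So your frame, your dictionary between strings and intervals, and your explanation of why an $L^2$ statement beats the $41/504$ threshold are all sound, but the crux step needs these sum-product type inputs rather than an average over primes.
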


Our arguments depend on the reduction of the problem to the study of intersections of intervals and multiplicative groups modulo~$p$ generated by~$g$, that has been established in~\cite{KoSh1}.
In turn, the question about the intersections of intervals and subgroups in residue rings has been studied in a number of works~\cite{BKS1,BKS2,KoSh1}. 
In particular, the results of \cite[Corollary~8]{BKS1} and \cite[Theorem~11.1]{KoSh1} are based on estimates of the length of the longest interval that is not hit by a subgroup of the multplicative group~$\mathbb{F}_p^*$ of the field~$\mathbb{F}_p$ of $p$ elements.
To prove Theorem~\ref{thm:period}, we use the results and ideas of~\cite{BKS1} to estimate the total number of intervals of a given length that do not intersect a given subgroup of~$\mathbb{F}_p^*$.

\section{Multiplicative Orders} 
\label{sec:Order}
We recall the following well-known implication of the classical result of~\cite{ErdMur}.

\begin{lemma}
\label{lem:Order}
For almost all primes~$p$, the multiplicative 
order $t$ of $g$ modulo $p$ satisfies $t > p^{1/2}$. 
\end{lemma}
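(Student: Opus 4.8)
The plan is to bound the set of \emph{exceptional} primes $p \le x$ for which the order $t = \mathrm{ord}_p(g)$ satisfies $t \le p^{1/2}$, and to show this set has size $o(x/\log x) = o(\pi(x))$; that is exactly the assertion that $t > p^{1/2}$ for almost all~$p$.

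First I would pass to divisors of $g^d - 1$. If $p$ is exceptional then $p \mid g^t - 1$ with $t \le p^{1/2} \le x^{1/2}$, so $p$ is a prime factor of $g^d - 1$ for the single value $d = t$, and the constraint $d \le p^{1/2}$ forces $p \ge d^2$. Hence every exceptional $p \le x$ occurs among the prime factors of $g^d-1$ exceeding $d^2$, for some $d \le x^{1/2}$. The number of such factors is at most $\log(g^d-1)/\log(d^2) \le (d \log g)/(2\log d)$, and summing over $d \le x^{1/2}$ by partial summation yields a total of order $O(x/\log x)$.

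The main obstacle is that this elementary count only attains $O(\pi(x))$, not the required $o(\pi(x))$: the terms with $d$ near $x^{1/2}$ already contribute a constant multiple of $\pi(x)$, and for large $g$ the bound is not even nontrivial, so no refinement of this bookkeeping can by itself cross from $O$ to $o$. Breaking this barrier is precisely the content of the theorem of Erd\H{o}s and Murty~\cite{ErdMur}, whose sharper analysis of the count of primes $p \le x$ having a prescribed small multiplicative order shows that the exceptional set really is $o(\pi(x))$. The proof therefore concludes by invoking~\cite{ErdMur}, which gives $\mathrm{ord}_p(g) > p^{1/2}$ for almost all primes~$p$; I expect essentially all the difficulty to reside in that cited input, the reduction above being routine.
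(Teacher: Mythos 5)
Your proof is correct and takes essentially the same route as the paper, which states the lemma without argument as a direct implication of the Erd\H{o}s--Murty theorem~\cite{ErdMur} --- exactly the input you invoke after your reduction. Your observation that the elementary count of prime factors of $g^d-1$ only reaches $O(x/\log x)$ rather than $o(x/\log x)$ correctly identifies why that citation carries all the weight.
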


\section{Bounds of Some Exponential Sums} 
\label{sec:ExpSum}

Let $p$ be prime and let $\mathcal{G} \subseteq \mathbb{F}_p^*$ be a subgroup of order~$t$, 
where $\mathbb{F}_p$ is a finite field of~$p$ elements. 

We denote
$$
\ep(z) = \exp(2 \pi i z/p)
$$
and define exponential sums
$$
S_{\lambda}(p;\mathcal{G}) = \sum_{v \in \mathcal{G}} \ep(\lambda v).
$$

Using \cite[Lemma~3]{HBK} (see also~\cite[Lemma~3.3]{KoSh1}) if $t < p^{2/3}$, and the well known bounds
$$
\left| S_{\lambda}(p;\mathcal{G})\right| \le p^{1/2} \quad \mbox{and} \quad \sum_{\lambda \in  \mathbb{F}_p^*}\left| S_{\lambda}(p;\mathcal{G})\right|^2 \le pt
$$
(see \cite[Equations~(3.4) and~(3.15)]{KoSh1}) if $t \ge p^{2/3}$, we derive:

\begin{lemma}
\label{lem:Bound SG4}
For any prime $p$ and a subgroup $\mathcal{G} \subseteq \mathbb{F}_p^*$  of order~$t$, we have
$$
\sum_{\lambda \in  \mathbb{F}_p^*} \left|S_{\lambda}(p;\mathcal{G})\right|^4 \ll p t^{5/2}.
$$
\end{lemma}

\section{Intervals Avoiding Subgroups} 

As before, let $p$ be prime and let $\mathcal{G} \subseteq \mathbb{F}_p^*$ be a subgroup of 
order~$t$. 

Let $\mathcal{U}(p;\mathcal{G},H)$ be the set of $u \in \mathbb{F}_p$ such the congruence
$$
v \equiv  u + x  \pmod p, \qquad v \in \mathcal{G}, \  0 \le x < H, 
$$
has no solution. 

\begin{lemma}
\label{lem:H and Exp}
Assume that $\mathcal{G}$ is of order $t>p^{1/2}$. 
Then, for any fixed integer $\nu \ge 1$, we have 
\begin{multline*}
\# \mathcal{U}(p;\mathcal{G},H) \le p^{2-1/4(\nu+1)+o(1)} H^{-1/2} t^{-5/4+(2\nu+1)/4\nu(\nu+1)} \\
+ p^{5/2-1/2\nu+o(1)} H^{-1} t^{-5/4+1/2\nu}.
\end{multline*}
\end{lemma}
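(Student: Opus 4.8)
The plan is to count via additive characters together with a high even moment. Write $I_\lambda = \sum_{0\le x<H}\ep(-\lambda x)$ and abbreviate $S_\lambda = S_\lambda(p;\mathcal{G})$, and let $N_u$ denote the number of pairs $(v,x)$ with $v\in\mathcal{G}$, $0\le x<H$ and $v\equiv u+x\pmod p$. Orthogonality of additive characters gives
$$
N_u = \frac{1}{p}\sum_{\lambda\in\mathbb{F}_p}\ep(-\lambda u)\,S_\lambda\,I_\lambda ,
$$
in which the term $\lambda=0$ contributes the main term $tH/p$, and $u\in\mathcal{U}(p;\mathcal{G},H)$ precisely when $N_u=0$. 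Hence for every such $u$ the non-principal part satisfies $\big|\sum_{\lambda\ne0}\ep(-\lambda u)S_\lambda I_\lambda\big|=tH$, and fixing the free integer $\nu\ge1$ and raising to the power $2\nu$ yields
$$
\#\mathcal{U}(p;\mathcal{G},H)\,(tH)^{2\nu}\le \sum_{u\in\mathbb{F}_p}\Big|\sum_{\lambda\ne0}\ep(-\lambda u)\,S_\lambda\,I_\lambda\Big|^{2\nu}.
$$

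Expanding the $2\nu$-th power and summing over $u$, orthogonality collapses the $u$-sum to the linear constraint $\lambda_1+\cdots+\lambda_\nu\equiv\mu_1+\cdots+\mu_\nu\pmod p$, leaving the task of estimating the constrained weighted sum
$$
p\sum_{\substack{\lambda_1,\dots,\lambda_\nu,\mu_1,\dots,\mu_\nu\ne0\\ \lambda_1+\cdots+\lambda_\nu\equiv\mu_1+\cdots+\mu_\nu}}\ \prod_{i=1}^\nu S_{\lambda_i}I_{\lambda_i}\prod_{j=1}^\nu\overline{S_{\mu_j}I_{\mu_j}} .
$$
Here I would call on Lemma~\ref{lem:Bound SG4} in the form $\sum_\lambda|S_\lambda|^4\ll pt^{5/2}$, together with the elementary companions $\sum_\lambda|S_\lambda|^2=pt$ and $|S_\lambda|\le p^{1/2}$ for $\lambda\ne0$, and the standard interval-sum estimates $|I_\lambda|\ll\min\{H,\,p/|\lambda|\}$, $\sum_\lambda|I_\lambda|^2=pH$ and $\sum_\lambda|I_\lambda|\ll p\log p$.

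To bound the constrained sum I would pass to absolute values and split the $2\nu$-tuples according to how the frequencies coincide. The generic (off-diagonal) tuples are handled by Hölder's inequality: one groups the $2\nu$ factors $|S_{\lambda_i}I_{\lambda_i}|$ so that a single pair is estimated through the fourth-moment bound $\big(\sum_\lambda|S_\lambda|^4\big)^{1/2}\ll p^{1/2}t^{5/4}$, which produces the factor $t^{5/4}$ responsible, after normalisation by $(tH)^{2\nu}$, for the common $t^{-5/4}$ in both terms, while the remaining degrees of freedom are absorbed through the $L^2$ and $L^\infty$ norms of $S_\lambda$ and the decay of $I_\lambda$, with $\nu$ governing how the interpolation weights are distributed. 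The two summands of the stated bound should emerge from the two competing regimes of this optimisation: one in which the saving comes from the sparsity of the interval, giving the $H^{-1/2}$ term with the $p$-power deficit $p^{-1/4(\nu+1)}$, the other a cruder $H^{-1}$ bound governed directly by the number of solutions of the constraint. All logarithmic factors, including those from $\sum_\lambda|I_\lambda|\ll p\log p$ and from the number of coincidence patterns, are absorbed into $p^{o(1)}$.

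The main obstacle is precisely the estimation of this constrained convolution sum: one must simultaneously exploit the fourth-moment cancellation of $S_\lambda$ and the concentration of $I_\lambda$ near $\lambda=0$, and organise the Hölder exponents so that the two mechanisms combine to yield exactly the claimed exponents in $\nu$. A delicate point is the contribution of the diagonal tuples, where several of the $\lambda_i$ or $\mu_j$ agree; these must be separated out and shown not to dominate, and keeping the bookkeeping of these coincidence classes correct—while checking that the hypothesis $t>p^{1/2}$ is what renders the first term the effective one—is where the real work lies.
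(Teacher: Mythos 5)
There is a genuine gap: your plan never engages the arithmetic input that actually produces the $\nu$-dependent exponents in the statement. In the paper, $\nu$ is \emph{not} a moment exponent in the variable $u$; the $u$-sum is handled by a plain Cauchy--Schwarz (second moment), giving $W\le pt^{-2}Z^{-2s}\Sigma$ with $\Sigma=\sum_{a\ne0}\bigl|\sum_x\ep(ax)\bigr|^{2s}|S_a|^2$. The exponents $(2\nu+1)/4\nu(\nu+1)$ and $1/2\nu$ come from a completely different place: one averages $\sum_{1\le|a|\le h}|S_a|^2$ over the group, writing it as $\frac1t\sum_\lambda M_\lambda(p;\mathcal{G},h)|S_\lambda|^2$ with $M_\lambda$ the number of representations $\lambda\equiv aw$, $w\in\mathcal{G}$, then applies Cauchy--Schwarz to reach $\sum_\lambda M_\lambda^2\le tN(p;\mathcal{G},h)$, where $N(p;\mathcal{G},h)$ counts solutions of the \emph{multiplicative} congruence $ux\equiv y\pmod p$ with $0<|x|,|y|\le h$, $u\in\mathcal{G}$. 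The bound on $N(p;\mathcal{G},h)$ from Theorem~1 of~\cite{BKS1} is the sole source of the parameter $\nu$ and of both terms of the lemma; the fourth-moment bound of Lemma~\ref{lem:Bound SG4} enters only through the paired factor $\bigl(\sum_\lambda|S_\lambda|^4\bigr)^{1/2}$. Your purely additive scheme --- expanding a $2\nu$-th moment over $u$ and H\"olderising the resulting constrained convolution of $S_\lambda I_\lambda$ --- has no mechanism for producing these exponents, and the part you defer as ``where the real work lies'' is exactly the estimate that cannot be carried out without the multiplicative structure.

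A second, more technical omission: with a single factor $I_\lambda$ per frequency you cannot control the tail $|\lambda|>h$; the paper first replaces the interval $[0,H)$ by an $s$-fold sumset of intervals of length $Z\approx H/s$ with $s\approx\frac32(1+\varepsilon^{-1})$, so that the weight $\bigl|\sum_{0\le x<Z}\ep(ax)\bigr|^{2s}\ll(p/|a|)^{2s}$ decays fast enough to make the range $|a|>h=\lceil p^{1+\varepsilon}/H\rceil$ negligible. Without this amplification (or some substitute such as a smoothed cutoff), the large-frequency contribution in your expansion is not under control. So the proposal, as it stands, identifies the correct opening move (orthogonality, isolating the main term $tH/p$) but misses both of the ideas that make the proof work.
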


\begin{proof} 
Let us fix some $\varepsilon> 0$. We put
$$
s = \rf{ \frac{3}{2} (1 + \varepsilon^{-1})}, \qquad  h = \rf{p^{1+\varepsilon}/H}, \qquad Z = \rf{H/s}.
$$
We can assume that $h < p/2$, as otherwise the bound is trivial (for example, it follows immediately from the bound of Heath-Brown and Konyagin \cite[Theorem~1]{HBK}). 
Obviously 
\begin{equation}
\label{eq:U W}
\mathcal{U}(p;\mathcal{G},H) \subseteq \mathcal{W}_s(p;\mathcal{G},Z),
\end{equation}
where $\mathcal{W}_s(p;\mathcal{G},Z)$ is the set of $u \in \mathbb{F}_p$ such the congruence
\begin{equation}
\label{eq:s-fold}
v \equiv u +  x_1 + \ldots + x_s  \pmod p, \quad v \in \mathcal{G}, \quad 0 \le x_1, \ldots, x_s < Z,
\end{equation}
has no solution.

For the number $Q_s(p;\mathcal{G},Z,u)$ of solutions to the congruence~\eqref{eq:s-fold}, exactly as in the proof of~\cite[Lemma~7.1]{KoSh1}, we obtain 
$$
Q_s(p;\mathcal{G},Z,u)  = \frac{1}{p} \sum_{|a|<p/2} \ep(- a u) \left( \sum_{0 \le x < Z} \ep(a x) \right)^{s} S_{a}(p;\mathcal{G}).
$$
where the sums $S_{a}(p;\mathcal{G})$ are defined in Section~\ref{sec:ExpSum}.

Separating the term $t Z^{s} p^{-1}$ corresponding to $a = 0$ and summing over all $u \in \mathcal{W}_s(p;\mathcal{G},Z)$ yields
$$
0 = \sum_{u \in \mathcal{W}_s(p;\mathcal{G},Z)} Q_s(p;\mathcal{G},Z,u) \ge \frac{t W Z^{s}}{p} - \frac{\sigma}{p},
$$
where 
$$
W = \# \mathcal{W}_s(p;\mathcal{G},Z)
$$ 
and 
$$
\sigma  = \sum_{1\le |a| < p/2}  \left|\sum_{u \in \mathcal{W}_s(p;\mathcal{G},Z)}  \ep(a u)\right| \, 
 \left| \sum_{0 \le x < Z}
\ep(a x) \right|^{s} \, \left| S_{a}(p;\mathcal{G}) \right|.
$$

Using the Cauchy inequality, and then the orthogonality relation for exponential functions, we obtain
\begin{align*}
\sigma^2  & \le \sum_{1\le |a| < p/2} \left| \sum_{u \in \mathcal{W}_s(p;\mathcal{G},Z)}  \ep(a u)\right|^2 \sum_{1\le |a| < p/2} \left| \sum_{0 \le x < Z} \ep(a x) \right|^{2s}\, \left| S_{a}(p;\mathcal{G})\right|^2 \\
& \le p W  \sum_{1\le |a| < p/2} \left| \sum_{0 \le x < Z} \ep(a x) \right|^{2s}\, \left| S_{a}(p;\mathcal{G})\right|^2.
\end{align*}
Hence
\begin{equation}
\label{eq:W Sigma}
W \le \frac{p}{t^2Z^{2s}} \Sigma, 
\end{equation}
where 
$$
\Sigma = \sum_{1\le |a| < p/2} \left| \sum_{0 \le x < Z} \ep(a x) \right|^{2s}\, \left| S_{a}(p;\mathcal{G})\right|^2.
$$
Following the idea of the proof of ~\cite[Lemma~7.1]{KoSh1}, we write
\begin{equation}
\label{eq:Sigma12}
\Sigma = \Sigma_1 + \Sigma_2,
\end{equation}
where 
\begin{align*}
\Sigma_1 & = \sum_{1 \le |a| \le h}  \left| \sum_{0 \le x < Z} \ep(a x) \right|^{2s} \, 
\left| S_{a}(p;\mathcal{G})\right|^2, \\
\Sigma_2 & = \sum_{h < |a| < p/2} \left| \sum_{0 \le x < Z}  \ep(a x) \right|^{2s} \, \left| S_{a}(p;\mathcal{G})\right|^2.
\end{align*}

For $1 \le | a| \le h$, we use the trivial estimate 
$$
\left| \sum_{0 \le x < Z} \ep(a x) \right| \le Z
$$
and derive
\begin{align*}
\Sigma_1 & \le Z^{2s} \sum_{1 \le |a| \le h} \left| S_{a}(p;\mathcal{G})\right|^2  = \frac{Z^{2s}}{t} \sum_{1 \le |a| \le h} 
\sum_{w\in \mathcal{G}} \left| S_{aw}(p;\mathcal{G})\right|^2 \\
& = \frac{Z^{2s}}{t} \sum_{\lambda \in \mathbb{F}_p^*}  M_{\lambda}(p;\mathcal{G},h) \, \left| S_{\lambda}(p;\mathcal{G})\right|^2,
\end{align*}
where $M_{\lambda}(p;\mathcal{G},h)$ denotes the number of solutions to the congruence
$$
\lambda \equiv a w \pmod p, \quad 1\le |a| \le h, \quad  w \in \mathcal{G}.
$$
Hence, by the Cauchy inequality
$$
\Sigma_1 \le \frac{Z^{2s}}{t} \left( \sum_{\lambda \in \mathbb{F}_p^*}  M_{\lambda}(p;\mathcal{G},h)^2 \right)^{1/2}
\left( \sum_{\lambda \in \mathbb{F}_p^*} \left|S_{\lambda}(p;\mathcal{G})\right|^4 \right)^{1/2}.
$$
As in \cite[Section~3.3]{BKS1}, we have
$$
\sum_{\lambda \in  \mathbb{F}_p^*}  M_{\lambda}(p;\mathcal{G},h)^2 \le t N(p;\mathcal{G},h),
$$
where $N(p;\mathcal{G},h)$ is the number of solutions of the congruence
$$
u x \equiv y \pmod p, \quad 0<|x|, |y|\le h, \quad u \in \mathcal{G}.
$$
Therefore,
\begin{equation}
\label{eq:Sigma N S4}
\Sigma_1 \le \frac{Z^{2s}}{t^{1/2}} N(p;\mathcal{G},h)^{1/2}
\left(\sum_{\lambda \in  \mathbb{F}_p^*}\left|S_{\lambda}(p;\mathcal{G})\right|^4\right)^{1/2}.
\end{equation}
It is shown in \cite[Theorem~1]{BKS1} that if $t \ge p^{1/2}$ then for any fixed integer~$\nu$ and any positive number~$h$, we have
\begin{equation}
\label{eq:Bound N}
N(p;\mathcal{G},h) \le h t^{(2\nu+1)/2\nu(\nu+1)} p^{-1/2(\nu+1)+o(1)} + h^2 t^{1/\nu} p^{-1/\nu+o(1)}.
\end{equation}

Therefore, using Lemma~\ref{lem:Bound SG4} and the bound~\eqref{eq:Bound N}
we derive from~\eqref{eq:Sigma N S4} that
\begin{equation}
\label{eq:Sigma1}
\Sigma_1\le  p^{1/2}  t^{3/4} Z^{2s} \left( h^{1/2} t^{(2\nu+1)/4\nu(\nu+1)} p^{-1/4(\nu+1)+o(1)} + h t^{1/2\nu} p^{-1/2\nu+o(1)}\right).
\end{equation}

If $h < |a|  < p/2$, then we use the bound
$$
\sum_{0 \le x < Z} \ep( a x) \ll \frac{p}{|a|},
$$
see \cite[Bound~(8.6)]{IwKow}.
{}From the  trivial bound
$$
\left| S_{a}(p;\mathcal{G})\right| \le t,
$$
recalling the choice of~$h$, we obtain
$$
\Sigma_2  \ll \sum_{h<|a|<p/2}  \left( \frac{p}{|a|} \right)^{2s} t^2 \ll t^2 \frac{p^{2s}}{h^{2s-1}} \ll t^2  \frac{Z^{2s}h}{p^{2s\varepsilon}} \le \frac{Z^{2s}p^3}{p^{2s\varepsilon}} \ll Z^{2s},
$$
as $2s\varepsilon > 3$ for our choice of~$s$. 
Thus the bound on~$\Sigma_2$ is dominated by the bound~\eqref{eq:Sigma1} on~$\Sigma_1$. 
Using~\eqref{eq:W Sigma} and~\eqref{eq:Sigma12}, we obtain
$$
W \le  p^{3/2} t^{-5/4} \big(h^{1/2} t^{(2\nu+1)/4\nu(\nu+1)} p^{-1/4(\nu+1)+o(1)} + h t^{1/2\nu} p^{-1/2\nu+o(1)}\big).
$$
Recalling~\eqref{eq:U W}, the choice of $h$ and that  $\varepsilon$ is arbitrary, 
after simple calculations, we obtain the result. 
\end{proof}

\begin{corollary}
\label{cor:Full}
Assume that $\mathcal{G}$ is of order $t > p^{1/2}$. 
Then for any $\varepsilon> 0$ and
$$
H \ge p^{19/24+\varepsilon}
$$
we have 
$$
\# \mathcal{U}(p;\mathcal{G},H)  = o(p).
$$
\end{corollary}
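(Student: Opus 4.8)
The plan is to derive the corollary directly from Lemma~\ref{lem:H and Exp} by selecting a single convenient integer $\nu$ and showing that, under the hypotheses $t > p^{1/2}$ and $H \ge p^{19/24+\varepsilon}$, both terms of the bound are $o(p)$.

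First I would record that in each of the two terms the exponent of $t$ is negative: for every fixed $\nu \ge 1$ one checks that $-5/4 + (2\nu+1)/(4\nu(\nu+1)) < 0$ and $-5/4 + 1/(2\nu) < 0$. Since the remaining factors do not involve $t$, the right-hand side of Lemma~\ref{lem:H and Exp} is a decreasing function of $t$, so its supremum over the admissible range is attained as $t \to p^{1/2}$. Hence I may substitute $t = p^{1/2}$, replacing every negative power $t^{-c}$ by $p^{-c/2}$, which reduces the whole question to a comparison of exponents of~$p$.

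Next I would set $\nu = 2$ and insert $t = p^{1/2}$ together with $H \ge p^{19/24+\varepsilon}$. A short computation shows that the first term becomes $p^{1-\varepsilon/2+o(1)}$ and the second term becomes $p^{23/24-\varepsilon+o(1)}$; the exponent $19/24$ is exactly the value that drives the first term down to $p^{1+o(1)}$ before the saving from $\varepsilon$ is taken into account. As $\varepsilon$ is fixed and the $o(1)$ tends to $0$ with $p$, both exponents are strictly below $1$ for all sufficiently large~$p$, so each term — and therefore $\#\mathcal{U}(p;\mathcal{G},H)$ — is $o(p)$.

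The only genuinely delicate point is the choice of $\nu$, and I would flag it as the step to get right rather than as a true obstacle. Increasing $\nu$ lowers the threshold on the exponent of $H$ coming from the first term, which behaves like $3/4 + 1/(4\nu(\nu+1))$, but raises the threshold coming from the second term, which behaves like $7/8 - 1/(4\nu)$; minimising the larger of the two over integers forces $\nu \in \{2,3\}$, each yielding precisely the stated exponent $19/24$. I would simply verify the two inequalities above for $\nu = 2$, since that single value already makes both terms admissible.
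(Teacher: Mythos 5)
Your proposal is correct and follows essentially the same route as the paper: substitute $t>p^{1/2}$ into Lemma~\ref{lem:H and Exp} (legitimate since the exponents of $t$ are negative), obtaining $\#\mathcal{U}(p;\mathcal{G},H)\le p^{11/8+1/8\nu(\nu+1)+o(1)}H^{-1/2}+p^{15/8-1/4\nu+o(1)}H^{-1}$, and then take $\nu=2$ (the paper notes $\nu=3$ works equally well, consistent with your threshold analysis). Your exponent computations $1-\varepsilon/2$ and $23/24-\varepsilon$ for $\nu=2$ are accurate.
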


\begin{proof} 
Since $t > p^{1/2}$, we have, for any fixed integer $\nu \ge 1$,
$$
\# \mathcal{U}(p;\mathcal{G},H) \le p^{11/8+1/8\nu(\nu+1)+o(1)} H^{-1/2} + p^{15/8-1/4\nu+o(1)} H^{-1} .
$$
Taking $\nu =2$ or $\nu=3$, we conclude the proof. 
\end{proof}

\section{Proof of Theorem~\ref{thm:period}} 

By Lemma~\ref{lem:Order} it is enough to consider prime $p$ for which
the multiplicative 
order $t$ of $g$ modulo $p$ satisfies $t > p^{1/2}$. 

We now take a positive integer $k \le (5/24-\varepsilon) \log_g p$ and  consider the intervals $\big[\frac{D}{g^k} , \frac{D+1}{g^k}\big)$.
As in the proof of \cite[Theorem~11.1]{KoSh1}, we observe that, for any integer $\ell \ge 0$ and  any $g$-ary string $(d_1,\ldots,d_k)$, we have $\delta_{\ell+i} = d_i$, $i = 1, \ldots, k$, if and only if 
$$
\frac{m g^\ell}{p} - \fl{ \frac{m g^\ell}{p}} \in \left[\frac{D}{g^k} , \frac{D+1}{g^k}\right) ,
$$
where $D = d_1 g^{k-1} + d_2 g^{k-2} + \cdots + d_k$ and the~$\delta_r$, $r =1,2,\ldots$, are defined by~\eqref{eq:expan} with $n=p$. 
Thus, if a string  $(d_1, \ldots, d_k)$ is not present in the $g$-ary expansion of~$m/p$, then each interval $[u,u+H)$ with 
$$
u = \rf{ \frac{D}{g^k}p}, \ldots, \fl{\frac{D+1/2}{g^k}p} \quad \mbox{and} \quad H = \fl{ \frac{1}{2g^k}p }$$
contains no element of the conjugacy class $m\mathcal{G}_p$ of the group $\mathcal{G}_p$ generated by $g$ modulo~$p$.
Clearly, different strings $(d_1, \ldots, d_k)$ correspond to different intervals of the values of~$u$, and each of them contains
$$
\rf{  \frac{D+1/2}{g^k}p} - \rf{  \frac{D}{g^k}p } \gg \frac{p}{g^k}
$$
values of~$u$. 
Therefore, the number of missing strings $(d_1, \ldots, d_k)$ satisfies 
$$
g^k - T_{m,p}(k) \ll \frac{g^k}{p} \# \mathcal{U}(p;\mathcal{G}_p, H).
$$
Since $g^k \le p^{5/24-\varepsilon}$, we infer from Corollary~\ref{cor:Full} that $\# \mathcal{U}(p;\mathcal{G}_p,H) = o(p)$, which proves Theorem~\ref{thm:period}.

\section{Composite Denominators}

It is quite likely that  one can also study $T_{m,n}(k)$ for  almost all 
composite $n$ by supplementing the ideas of this work with 
those of~\cite{BKPS} (to get an analogue of Lemma~\ref{lem:Bound SG4}) 
and also using the result of~\cite{KurPom} that gives an analogue of
Lemma~\ref{lem:Order}.

 \section*{Acknowledgements}

The second author wishes to express his heartfelt thanks to the members of the Department of Computing of the Macquarie University for their hospitality during his stay as a visiting academic.

During the preparation of this work the first author was supported in part by 
the  Australian Research Council  Grant~DP1092835.

\end{document}